\newlength{\defbaselineskip}
\newcommand{\setlinespacing}[1]
          {\setlength{\baselineskip}{#1 \defbaselineskip}}
 \newcommand{\lphi}{\varphi}
 \newcommand{\N}{\mathbb{N}}
 \newcommand{\bth}{\begin{thm}}
 \newcommand{\beq}{\begin{eqnarray*}}
 \newcommand{\eeq}{\end{eqnarray*}}
 \DeclareRobustCommand{\rs}{\genfrac\{\}{0pt}{}}
\newtheorem{thm}{Theorem}[section]
\newtheorem{lem}[thm]{Lemma}
\newtheorem{prop}[thm]{Proposition}
\begin{document}

\title[Periodicity of residual $r$-Fubini sequences]{On the periodicity problem of\\ residual $r$-Fubini sequences}

\author{Amir Abbas Asgari}
\address{National Organization for Development of Exceptional Talents (NODET), Tehran, Iran}
\email{asgari@helli.ir}

\author{Majid Jahangiri}
\address{School of Mathematics, Institute for Research in Fundamental Sciences (IPM), Tehran, Iran}
\email{jahangiri@ipm.ir}

\subjclass[2010]{Primary 11B50, 11B75, 05A10; Secondary 11B73, 11Y55}

\date{}

\keywords{residues modulo prime power factors, r-Fubini numbers, periodic sequences, r-Stirling numbers of the second kind}

\dedicatory{}

\begin{abstract}
For any positive integer $r$, the $r$-Fubini number with parameter $n$, denoted by $F_{n,r}$, is equal to the number of ways that the elements of a set with $n+r$ elements can be weak ordered such that the $r$ least elements are in distinct orders. In this article we focus on the sequence of residues of the $r$-Fubini numbers modulo a positive integer $s$ and show that this sequence is periodic and then, exhibit how to calculate its period length.  As an extra result, an explicit formula for the $r$-Stirling numbers is obtained which is frequently used in calculations.
\end{abstract}

\maketitle


\section{Introduction}
\textit{The Fubini numbers} (also known as the \textit{Ordered Bell numbers}) form an integer sequence in which the $n$th term counts the number of weak orderings of a set with $n$ elements. Weak ordering means that the elements can be ordered, allowing ties. A. Cayley studied the Fubini numbers as the number of a certain kind of trees with $n+1$ terminal nodes [2]. The Fubini numbers can also be defined as the summation of the \textit{Stirling numbers of the second kind}. The Stirling number of the second kind which is denoted by $\rs{n}{k}$, counts the number of partitions of $n$ elements into $k$ non-empty subsets.  The sequence of residues of the Fubini numbers modulo a positive integer $s$ is pointed out by Bjorn Poonen. He showed that this sequence is periodic and calculated the period length for each positive integer $s$ [5].

\textit{The $r$-Stirling numbers of the second kind} are defined as an extension to the Stirling numbers of the second kind, and similarly, an $r$-Fubini number is defined as the number of ways which the elements of a set with $n+r$ elements can be weak ordered such that the first $r$ elements are in distinct places. One can study the same problem of periodicity of residual sequence in case of the $r$-Fubini numbers. I. Mezo investigated this problem for $s=10$ [4]. In this article, $\omega(A_{r,s})$, the period of the $r$-Fubini numbers modulo any positive integer $s \in \mathbb{N}$ is computed. Based on the Fundamental Theorem of Arithmetic, $\omega(A_{r,p})$ is calculated for powers of odd primes $p^m$. The cases $s=2^m$ are studied separately. Therefore if $s=2^m p_{1}^{m_{1}} p_{1}^{m_{1}} \ldots p_{k}^{m_{k}}$ is the prime factorization, then the $\omega(A_{r,s})$ is equal to the least common multiple (LCM) of $\omega(A_{r,p_i^{m_i}})$'s and $\omega(A_{r,2^m})$, for $i=1,2,\ldots,k$.

The preliminaries are presented in Section \ref{Basic concepts} and in Sections \ref{primes} and \ref{two} the length of the periods are computed in the case of odd prime powers and the 2 powers, respectively. The last section contains the final theorem which presents the conclusion of this article.

\section{Basic Concepts} \label{Basic concepts}

The Stirling number of the second kind with the parameters $n$ and $k$ counts the number of ways that the set $A=\{1, 2,\ldots, n\}$ with $n$ elements can be partitioned into $k$ non-empty subsets. If we want that the first $r$ elements of $A$ are in distinct subsets, the number of ways to do so is the $r$-Stirling number of the second kind with parameters $n$ and $k$, which is denoted by $\rs{n}{k}_{r}$ (so it is clear that $n \geq k \geq r$). Fubini numbers are defined as follows [4]
\begin{align*}
F_{n}=\sum_{k=0}^{n} k!\rs{n}{k}.
\end{align*}
In a similar way we can define the $r$-Fubini numbers as the number of ways which the elements of $A$ can be weak ordered such that the elements $\{1,2,\ldots,r\}$ are in distinct ranks. These numbers are denoted by $F_{n,r}$ and are evaluated by
\begin{align*}
F_{n,r}=\sum_{k=0}^{n} (k+r)!\rs{n+r}{k+r}_{r}.
\end{align*}
There are simple relations and formulae about $\rs{n}{k}_{r}$ which are listed below. One can find a proof of them in [4], [1] and [3, \S 4].
\begin{align}
\rs{n}{m}_{r}&= \rs{n}{m}_{r-1}-(r-1)\rs{n-1}{m}_{r-1}, 1 \leq r \leq n \label{1}\\
\rs{n}{m}_{1}&= \rs{n}{m}\label{2}\\
\rs{n+r}{r}_{r}&= r^n\label{3}\\
\rs{n+r}{r+1}_{r}&= (r+1)^{n}-r^n\label{4}\\
\rs{n}{m}&= \frac{1}{m!}\sum_{j=1}^{m}(-1)^{m-j}\binom{m}{j}j^n\label{5}.
\end{align}

In addition to above recurrence relations of the $r$-Stirling numbers of the second kind, a direct way to compute these numbers is given in the next theorem.
\bth
For $n, m \in\mathbb N$ and $r \leq m \leq n$, the $r$-Stirling number of the second kind with the parameters $n$ and $m$ is
\begin{align*}
\rs{n}{m}_{r}=\frac{1}{m!}\sum_{j=r}^{m}(-1)^{m-j}\binom{m}{j}j^{n-(r-1)} \left(\frac{(j-1)!}{(j-r)!}\right).
\end{align*}
\end{thm}

\begin{proof}
We prove it by the induction on $r$. For $r=2$, the relations (\ref{1}) and (\ref{5}) result
\begin{align*}
\rs{n}{m}_{2}=&\rs{n}{m}-\rs{n-1}{m}\\
=&\frac{1}{m!}\left(\sum_{j=1}^{m}(-1)^{m-j}\binom{m}{j}j^{n}-\sum_{j=1}^{m}(-1)^{m-j}\binom{m}{j}j^{n-1}\right)\\
=&\frac{1}{m!}\sum_{j=2}^{m}(-1)^{m-j}\binom{m}{j}j^{n-1}(j-1)
\end{align*}
Assume that
\begin{eqnarray}
\rs{n}{m}_{r}=\frac{1}{m!}\sum_{j=r}^{m}(-1)^{m-j}\binom{m}{j}j^{n-(r-1)} \left(\frac{(j-1)!}{(j-r)!}\right).
\end{eqnarray}
For $\rs{n}{m}_{r+1}$, use (\ref{1}) to conclude that

\begin{align*}
\rs{n}{m}_{r+1} & =\rs{n}{m}_{r}-r\rs{n-1}{m}_{r}\\
&=\frac{1}{m!}\sum_{j=r}^{m}(-1)^{m-j}\binom{m}{j}j^{n-(r-1)} \left(\frac{(j-1)!}{(j-r)!}\right)\\
&-r \left(\frac{1}{m!}\right) \sum_{j=r}^{m}(-1)^{m-j}\binom{m}{j}j^{n-1-(r-1)} \left(\frac{(j-1)!}{(j-r)!}\right)\\
&=\frac{1}{m!}\sum_{j=r}^{m}(-1)^{m-j}\binom{m}{j} j^{n-r} (j-r) \left(\frac{(j-1)!}{(j-r)!}\right)\\
&=\frac{1}{m!}\sum_{j=r+1}^{m}(-1)^{m-j}\binom{m}{j} j^{n-r} \left(\frac{(j-1)!}{(j-(r+1))!}\right).
\end{align*}
\end{proof}

By $\varphi(n)$ we indicate the number of integer numbers less than $n$ and co-prime to it. It is known as Euler's totient function. The value of $\varphi(n)$ can be computed via the following relation [3]
\begin{align*}
\varphi(n)=n\prod_{p \mid n}(1- \frac{1}{p})
\end{align*}

\section{The $r$-Fubini residues modulo prime powers} \label{primes}
Let $p$ be a prime number greater than 2 and $m$ be a positive integer. If $\{F_{n,r}\}$ denotes the $r$-Fubini numbers for a fixed positive integer $r$, we indicate by $A_{r,q}=\{F_{n,r}$ (mod $q$)$\}$, for $n\in \mathbb{N}$, the sequence of residues of the $r$-Fubini numbers modulo the positive integer $q$. In this section we try to compute the period length of the sequence $A_{r,q}$ when $q=p^m$. We denote this length by $\omega(A_{r,q})$.

\begin{prop}
Let $p$ be an odd prime and let $q=p^m$, $m\in\N$. If $q \leq r$, then $\omega(A_{r,q})=1$
\end{prop}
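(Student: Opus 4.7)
The plan is to show that $F_{n,r}\equiv 0\pmod q$ for every $n\in\N$, which immediately forces the sequence $A_{r,q}$ to be constantly zero and therefore of period $1$. The key observation is that every coefficient appearing in the series expansion of $F_{n,r}$ is already divisible by $q$ as soon as $r\geq q$.

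First I would recall the defining identity
\[
F_{n,r}=\sum_{k=0}^{n}(k+r)!\,\rs{n+r}{k+r}_{r},
\]
and note that each factorial $(k+r)!$ with $k\geq 0$ is a multiple of $r!$. It therefore suffices to prove that $v_p(r!)\geq m$ whenever $r\geq q=p^m$, where $v_p$ denotes the $p$-adic valuation. For this I would invoke Legendre's formula $v_p(r!)=\sum_{i\geq 1}\lfloor r/p^i\rfloor$ and retain just the $i=1$ term, which gives
\[
v_p(r!)\;\geq\;\lfloor r/p\rfloor\;\geq\;p^{m-1}\;\geq\;m.
\]
The last inequality is a one-line induction on $m$: using the hypothesis $p\geq 3$, one has $p\cdot p^{m-1}\geq 3m\geq m+1$ for every $m\geq 1$.

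Combining these remarks, $q=p^m$ divides $(k+r)!$ for every $k\geq 0$, so each summand in the displayed expression for $F_{n,r}$ vanishes modulo $q$. Hence $F_{n,r}\equiv 0\pmod q$ for all $n\in\N$, the sequence $A_{r,q}$ is identically zero, and $\omega(A_{r,q})=1$.

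I do not anticipate any genuine obstacle. The only step requiring a moment's thought is the elementary inequality $p^{m-1}\geq m$, which is immediate once the odd-prime hypothesis $p\geq 3$ is invoked; everything else is routine bookkeeping on factorials. It is worth noting that the oddness of $p$ enters only through this inequality, and in fact the same argument would go through for $p=2$ as well; the restriction to odd primes in this section simply reflects that the case $p=2$ is treated by the refined techniques of Section~\ref{two}.
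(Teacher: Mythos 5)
Your proof is correct and follows the same basic strategy as the paper: show that $q$ divides every summand $(k+r)!\rs{n+r}{k+r}_{r}$, hence $F_{n,r}\equiv 0\pmod q$ for all $n$ and the residue sequence is constant. The difference is one of rigor rather than of route. The paper's own proof only observes that $p\leq r$ implies $p\mid (k+r)!$, and accordingly concludes only $\omega(A_{r,p})=1$; taken literally it establishes the claim for $m=1$ and leaves the divisibility by $p^m$ unaddressed. Your version closes that gap: from $r\geq p^m$ and Legendre's formula you get $v_p\bigl((k+r)!\bigr)\geq v_p(r!)\geq \lfloor r/p\rfloor\geq p^{m-1}\geq m$, so $q=p^m$ genuinely divides each term. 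The inequality $p^{m-1}\geq m$ and the induction you sketch are fine (indeed they hold already for $p=2$, consistent with the paper's remark in Section \ref{two} that the cases $2^m\leq r$ are trivial with period $1$). In short: same idea, but your write-up actually proves the stated proposition for all $m$, whereas the paper's does so only after the reader silently upgrades $p\mid(k+r)!$ to $p^m\mid(k+r)!$.
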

\begin{proof}
The proof is very simple. Since $p \leq r$, we can deduce that $p \mid (k+r)!$, for $k \geq 0$, and by the relation $F_{n,r}=\sum_{k=0}^{n}(k+r)!\rs{n+r}{k+r}_{r}$, we have $p \mid F_{n,r}$. Therefore $\omega(A_{r,p})=1$.
\end{proof}
As pointed out in above proposition, the cases in which the prime power factor of $s$ is less than or equal to $r$ have the period length 1, so it is sufficient to investigate the period length in the cases of $q>r$.

\begin{lem}\label{firstlem}
Let $p$ be an odd prime and $r,m\in \mathbb{N}$ with $p \geq r+1$. Then
\begin{align*}
p^{m}-r \geq m.
\end{align*}
\end{lem}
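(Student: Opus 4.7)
The plan is to reduce the inequality $p^m - r \geq m$ to a one-line application of Bernoulli's inequality, using both hypotheses ($p$ odd prime, so $p \geq 3$; and $r \leq p-1$) in a very economical way. First I would rewrite $p^m$ as $(1+(p-1))^m$ and invoke Bernoulli to obtain
\begin{align*}
p^m \;\geq\; 1 + m(p-1).
\end{align*}
Subtracting $r$ from both sides and then bounding $r$ above by $p-1$ (which is available from the hypothesis $p \geq r+1$) yields
\begin{align*}
p^m - r \;\geq\; 1 + m(p-1) - (p-1) \;=\; 1 + (m-1)(p-1).
\end{align*}

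Next I would use that $p$ is an odd prime, so $p \geq 3$ and consequently $p - 1 \geq 2$. Plugging this into the previous display gives
\begin{align*}
p^m - r \;\geq\; 1 + 2(m-1) \;=\; 2m - 1 \;\geq\; m,
\end{align*}
where the last inequality holds for every $m \in \mathbb{N}$. This closes the proof.

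As an alternative, if Bernoulli's inequality feels too heavy for such an elementary statement, one could equally well argue by induction on $m$. The base case $m=1$ is literally the hypothesis $p \geq r+1$. For the inductive step, from $p^m \geq m + r$ one multiplies by $p \geq 3$ to get $p^{m+1} \geq 3m + 3r \geq (m+1) + r$, since $2m + 2r \geq 2 \geq 0$. I do not foresee any real obstacle here; the statement is a quantitative cushion that simply needs to be recorded before it is applied in subsequent sections, and the only subtlety is to ensure that both pieces of data ($p \geq 3$ and $r \leq p-1$) are actually used.
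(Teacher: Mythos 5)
Your proof is correct, and your primary route is genuinely different from the paper's. The paper argues by induction on $m$, in effect establishing the stronger statement $p^{m}\geq p+m-1$: it multiplies the (reformulated) hypothesis $p^{m}\geq p-1+m$ by $p$ and uses the auxiliary inequality $p(p+m)>2p+m$. You obtain the same strengthening in one line from Bernoulli, $p^{m}=(1+(p-1))^{m}\geq 1+m(p-1)$, then substitute $r\leq p-1$ and $p-1\geq 2$ to land on $p^{m}-r\geq 2m-1\geq m$. This is shorter, needs no induction, and sidesteps the paper's slightly awkward step where the induction hypothesis $p^{m}\geq m+r$ is "reformulated" as $p^{m}\geq p-1+m$ (which is really a switch to proving a stronger claim rather than a consequence of the hypothesis). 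Your fallback induction is essentially the paper's argument cleaned up: the base case is literally $p\geq r+1$, and the step uses only $p\geq 3$, with $p^{m+1}\geq 3(m+r)\geq (m+1)+r$ reducing to $2m+2r\geq 1$, which certainly holds for $m,r\in\mathbb{N}$ (your bound $2m+2r\geq 2$ is even generous). Both of your arguments correctly use each hypothesis exactly once; there are no gaps.
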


\begin{proof}
For $m=1$ the result is obvious. Suppose the inequality holds for any $m\geq 2$. Since $p(p+m) > 2(p+m) > 2p+m$, we have
\begin{eqnarray}
p^2+pm-p\geq p+m\label{6}.
\end{eqnarray}
Since $p-1 \geq r$, the induction hypothesis can be reformulated to $p^m\geq p-1+m$. Multiplication by $p$ results $p^{m+1}\geq p^2+pm-p$. By (\ref{6}) we have $p^{m+1}\geq p+(m+1)-1$, Q.E.D.
\end{proof}

\bth
Let $p$ be an odd prime and $q=p^m$. After the $(m-1)$th term the sequence $A_{r,q}$ has a period with length $\omega(A_{r,q})=\lphi(q)$.
\end{thm}

\begin{proof}
If $n \geq q-r-1$ we can write
\begin{align*}
F_{n+\lphi(q),r}- & F_{n,r}= \sum_{k=0}^{n+\lphi(q)}(k+r)!\rs{n+\lphi(q)+r}{k+r}_{r}-\sum_{k=0}^{n}(k+r)!\rs{n+r}{k+r}_{r}\\
\equiv & \sum_{k=0}^{q-r-1}(k+r)! \left(\rs{n+\lphi(q)+r}{k+r}_{r}-\rs{n+r}{k+r}_{r}\right) \hbox{(mod  $q$)}\\
\equiv & \sum_{k=0}^{q-r-1} \sum_{j=r}^{k+r}(-1)^{k+r-j} \binom{k+r}{j}j^{n+1} \left(\frac{(j-1)!}{(j-r)!}\right)(j^{\lphi (q)}-1) \hbox{ (mod $q$)}.
\end{align*}
If $j=cp$, $c\in\N$,  then $j^{n+1}=(cp)^{q-r+h}$, for some $h\geq 0$, so from Lemma \ref{firstlem} it follows that $j^{n+1}\equiv 0$ (mod $q$). If $(j,q)=1$, by Euler's Theorem $j^{\lphi(q)}-1\equiv 0$(mod $q$), so the right hand side of the above congruence relation vanished and we have
\begin{align}\label{case1}
F_{n+\lphi(q),r} \equiv & F_{n,r} \hbox{(mod $q$), for $n \geq q-r-1$}.
\end{align}
If $m-1 \leq n<q-r-1$ then
\begin{align*}
F_{n+\lphi(q),r}-  F_{n,r} \equiv  \sum_{k=0}^{q-r-1} & (k+r)! \left(\rs{n+\lphi(q)+r}{k+r}_{r}-\rs{n+r}{k+r}_{r}\right)\\
- & \sum_{k=n+\lphi(q)+1}^{q-r-1}(k+r)!\rs {n+\lphi(q)+r}{k+r}_{r} \\
+ & \sum_{k=n+1}^{q-r-1}(k+r)!\rs {n+r}{k+r}_{r} \hbox{(mod $q$)}\\
\end{align*}
\begin{align*}
\equiv \sum_{k=0}^{q-r-1} & \sum_{j=r}^{k+r}(-1)^{k+r-j} \binom{k+r}{j}j^{n+1} \left(\frac{(j-1)!}{(j-r)!}\right) (j^{\lphi (q)}-1)\\
- & \sum_{k=n+\lphi(q)+1}^{q-r-1}(k+r)!\rs {n+\lphi(q)+r}{k+r}_{r} \\
+ & \sum_{k=n+1}^{q-r-1}(k+r)!\rs {n+r}{k+r}_{r} \hbox{(mod $q$)}.
\end{align*}
Since $n\geq m-1$, in the indices where $j=c p$, $c\in\N$, we have $j^{n+1}=(cp)^{m+h}$, for some $h \geq 0$, and it deduced that $j^{n+1} \equiv 0$ (mod $q$). When $(j,q)=1$, again by Euler's Theorem $j^{\lphi(q)}-1\equiv 0$(mod $q$). In the sums $\sum_{k=n+1}^{q-r-1}(k+r)!\rs {n+r}{k+r}_{r}$ and $\sum_{k=n+\lphi(q)+1}^{q-r-1}(k+r)!\rs {n+\lphi(q)+r}{k+r}_{r}$ the upper parameter of the $r$-Stirling number is less than the lower one and therefore these two sums are equal to zero. So
\begin{align*}
F_{n+\lphi(q),r}-F_{n,r} \equiv & \sum_{k=0}^{q-r-1} \sum_{j=r}^{k+r}(-1)^{k+r-j} \binom{k+r}{j}j^{n+1}\left( \frac{(j-1)!}{(j-r)!} \right)\\
\times & (j^{\lphi (q)}-1) \equiv 0 \hbox{(mod $q$)},
\end{align*}
and therefore
\begin{align}\label{case2}
F_{n+\lphi (q),r} \equiv F_{n,r} \hbox{(mod $q$), for $m-1 \leq n <q-r-1$}.
\end{align}
Combining results (\ref{case1}) and (\ref{case2}) gives $F_{n+\lphi(q),r} \equiv F_{n,r}$ (mod $q$), for $n \geq m-1$.
\end{proof}

\section{The $r$-Fubini residues modulo powers of 2} \label{two}
Similar to many computations in number theory, the case of $p=2$ has its own difficulties which needs special manipulations. In the case of powers of 2, initially we calculate the residues of 2-Fubini numbers and then use the results in the case of the $r$-Fubini numbers. We classify the sequence of remainders of 2-Fubini numbers modulo $2^m$, $m \geq 7$, in Theorem \ref{thm2fubini128}  and then, work on remainders of the $r$-Fubini numbers modulo $2^{m}$, $m \geq 7$ in Theorem \ref{thmrfubini128}. The special cases will be proved in Theorems \ref{thm2fubini8to64}, \ref{thmrfubini4,8} and \ref{thmrfubini8to64}. The trivial cases in which $2^m\leq r$ with period length 1 are omitted.

\begin{thm} \label{thm2fubini8to64}
If $3 \leq m \leq 6$, then after the $(m-1)$th term the sequence $A_{2,2^m}$ has a period with length $\omega(A_{2,2^m})=2$.
\end{thm}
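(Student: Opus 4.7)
The plan is to establish two facts: \textbf{(a)} $F_{n+2,2}\equiv F_{n,2}\pmod{2^m}$ for every $n\geq m-1$, and \textbf{(b)} that the period cannot be $1$.

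For \textbf{(a)} I would follow the strategy of the odd-prime-power theorem in Section \ref{primes}. Writing
\begin{align*}
F_{n+2,2}-F_{n,2} &= \sum_{k=0}^{n}(k+2)!\bigl(\rs{n+4}{k+2}_2-\rs{n+2}{k+2}_2\bigr) \\
&\quad + \sum_{k=n+1}^{n+2}(k+2)!\rs{n+4}{k+2}_2,
\end{align*}
and applying the explicit formula of Theorem 2.1 with $r=2$ (so that $(j-1)!/(j-2)!=j-1$) together with the factorisation $j^{n+3}-j^{n+1} = j^{n+1}(j-1)(j+1)$, the main double sum becomes
\begin{align*}
\sum_{k,j} (-1)^{k-j}\binom{k+2}{j}(j-1)^2(j+1)\, j^{n+1}.
\end{align*}
The trailing boundary sum and the sub-case $m-1\leq n<2^m-3$ would be dispatched exactly as in the odd-prime-power proof, using $\rs{N}{M}_2 = 0$ whenever $M>N$ together with the high $2$-adic valuation of $(k+2)!$ for large $k$.

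Next I would truncate the outer $k$-sum at $k+2 \leq 7$, since $\nu_2(8!)=7\geq m$ kills the tail modulo $2^m$. In this finite range the summands with even $j$ vanish (because $\nu_2(j^{n+1})\geq n+1\geq m$), and after swapping the order of summation the remaining odd-$j$ contribution reduces to three terms indexed by $j\in\{3,5,7\}$, each of the form $(j-1)^2(j+1)\, j^{n+1}\, C_j$ for a short inner binomial sum $C_j$. The factor $(j-1)^2(j+1)$ already has $\nu_2\geq 4$, which combined with the $2$-adic valuation of $C_j$ settles the cases $m\leq 5$ term by term. For $m=6$ one must pair the $j=3$ and $j=7$ contributions and exploit a parity cancellation to gain the missing factor of $2$.

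For \textbf{(b)} I would compute $F_{m-1,2}$ and $F_{m,2}$ modulo $2^m$ directly for each $m\in\{3,4,5,6\}$ and exhibit a pair of consecutive residues that differ. The main obstacle is the $m=6$ sub-case of step \textbf{(a)}, where a term-by-term $2$-adic bound is insufficient; the required cancellation between specific odd-$j$ contributions is precisely what separates this exceptional small-$m$ regime from the generic behaviour for $m\geq 7$ treated in Theorem \ref{thm2fubini128}.
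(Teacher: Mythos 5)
Your proposal is correct, and while the setup (truncation of the $k$-sum at $k+2\leq 7$ via $\nu_2(8!)=7$, the explicit formula with $r=2$, the even/odd split on $j$ with $\nu_2(j^{n+1})\geq n+1\geq m$ killing even $j$) coincides with the paper's, your finishing step is genuinely different. The paper substitutes $j=2l+1$, extracts the factor $16$ from $(j^2-1)(j-1)=8l^2(l+1)$, and then disposes of the residual sum by brute-force enumeration over $2\leq n\leq 33$, invoking $j^{32}\equiv 1\pmod{64}$ to reduce to finitely many $n$. You instead swap the order of summation and evaluate the inner binomial sums exactly: $C_3=\sum_{k=1}^{5}(-1)^{k-1}\binom{k+2}{3}=22$, $C_5=16$, $C_7=1$, giving contributions $352\cdot 3^{n+1}$, $1536\cdot 5^{n+1}$, $288\cdot 7^{n+1}$ with $2$-adic valuations $5+\nu_2(3^{n+1})=5$, $9$, $5$; the pairing you propose for $m=6$ indeed works, since $352\cdot 3^{n+1}+288\cdot 7^{n+1}=32\,(11\cdot 3^{n+1}+9\cdot 7^{n+1})$ and the bracket is a sum of two odd numbers, hence even, yielding divisibility by $64$ uniformly in $n$ with no enumeration at all. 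This is cleaner and more transparent than the paper's computation. You also add a minimality check (part (b)), showing the period is not $1$, which the paper omits entirely even though the statement asserts $\omega(A_{2,2^m})=2$ exactly. Two small cautions: the boundary sum $\sum_{k=n+1}^{n+2}(k+2)!\rs{n+4}{k+2}_2$ dies only because $\nu_2((k+2)!)\geq 7$ there (the relation $\rs{N}{M}_2=0$ for $M>N$ does not apply to those two terms), and for $m\leq 5$ you should re-truncate at the appropriate smaller $k$ so that the even-$j$ vanishing only requires $n\geq m-1$ rather than $n\geq 5$; both points are consistent with what you wrote but should be made explicit in a final write-up.
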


\begin{proof}
By using the formula $F_{n,2}=\sum_{k=0}^{n}(k+2)!\rs{n+2}{k+2}_{2}$ we prove that $F_{n+2,2}-F_{n,2} \equiv 0$ (mod $2^6$) then it is concluded that $F_{n+2,2}-F_{n,2} \equiv 0$ (mod $2^m$), $3\leq m \leq 5$.
\begin{align*}
F_{n+2,2}-F_{n,2}= & \sum_{k=0}^{n+2}(k+2)!\rs{n+4}{k+2}_{2}-\sum_{k=0}^{n}(k+2)!\rs{n+2}{k+2}_{2}\\
\equiv & \sum_{k=0}^{5}(k+2)! \left(\rs{n+4}{k+2}_{2}-\rs{n+2}{k+2}_{2}\right) \hbox{(mod $2^6$)}\\
\equiv & \sum_{k=0}^{5}\sum_{j=2}^{k+2}(-1)^{k+2-j}\binom{k+2}{j} j^{n+1} (j^2-1) (j-1) \hbox{ (mod $2^6$)}.
\end{align*}
$m=6$ implies that $n \geq 5$, so if $j$ is even, then $j^{n+1}=(2c)^{6+h}$, for some $h \geq 0$ and therefore $64 \mid j^{n+1}$. For odd $j$'s, $(j,64)=1$ so by Euler's Theorem we have $j^{32} \equiv 1$ (mod 64) and therefore $j^{n+1+32} \equiv j^{n+1}$ (mod 64). This implies
\begin{align*}
F_{n+2,2}-F_{n,2} \equiv \sum_{k=0}^{5}\sum_{l=1}^{\lfloor (k+1)/2 \rfloor} & (-1)^{k+2-(2l+1)}\binom{k+2}{2l+1}  (2l+1)^{n+1}\\
& \times \left((2l+1)^2-1)\right) \times 2l \hbox{ (mod 64)}\\
\equiv  16\sum_{k=0}^{5}(-1)^{k+1} & \left(\sum_{l=1}^{\lfloor (k+1)/2  \rfloor}\binom{k+2}{2l+1}  (2l+1)^{n+1} \left(\frac{l(l+1)}{2}\right) l \right).
\end{align*}
Enumerating the last summation for $2 \leq n \leq 33$ shows that it is divisible by 64 and because of periodicity of remainders of $j^{n+1}$ modulo 64, the result follows.
\end{proof}

Now we present the following lemma analogues to Lemma \ref{firstlem} in previous section.
\begin{lem}\label{secondlem}
If $m \in \mathbb{N}$ and $m > 1$, then $2^m-2 \geq m$.
\end{lem}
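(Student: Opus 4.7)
The plan is to prove this by straightforward induction on $m$, mirroring the style of the proof of Lemma \ref{firstlem} in the previous section. The inequality is elementary, so the difficulty will not lie in any algebraic manipulation but only in choosing a clean base case and a clean inductive step.

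For the base case I would take $m = 2$, where $2^2 - 2 = 2 \geq 2$, so the inequality holds with equality. (One could equally well start at $m = 1$ to match the hypothesis $m > 1$, but $m = 1$ gives $2^1 - 2 = 0 < 1$, which is why the lemma excludes it; hence $m = 2$ is the correct starting point.)

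For the inductive step, I would assume $2^m - 2 \geq m$ for some $m \geq 2$ and deduce $2^{m+1} - 2 \geq m + 1$. The key computation is
\begin{align*}
2^{m+1} - 2 \;=\; 2 \cdot 2^m - 2 \;=\; 2(2^m - 2) + 2 \;\geq\; 2m + 2 \;\geq\; m + 1,
\end{align*}
where the first inequality uses the induction hypothesis and the last inequality holds whenever $m \geq -1$, in particular for all $m \geq 2$. This completes the induction.

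I do not anticipate any real obstacle; the content of the lemma is simply that the exponential function $2^m$ eventually dominates the linear function $m + 2$, and the factor of $2$ gained at each step of the induction leaves plenty of room. The only thing to be careful about is to start the induction at $m = 2$ rather than $m = 1$, since the hypothesis of the lemma is $m > 1$.
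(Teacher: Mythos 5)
Your proof is correct and follows essentially the same route as the paper: induction starting at $m=2$, with the inductive step obtained by doubling the hypothesis $2^m-2\geq m$ and observing that $2m+2\geq m+1$. No issues.
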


\begin{proof}
For m=2 the result is obvious. If we assume that
\begin{eqnarray}
2^m-2 \geq m\label{8}
\end{eqnarray}
then multiplication by 2 gives $2^{m+1}-4 \geq 2m$. Since $2^{m+1} \geq 2m+4 > m+3$, we have $2^{m+1} \geq m+3$ and so $2^{m+1}-2 \geq m+1$.
\end{proof}
The following lemma provides a simple but essential relation used in the next theorem. Its proof is provided in Appendix A.
\begin{lem}\label{appendix}
For $m\geq 7$ and $5 \leq i \leq 2^{m-6}$ we have $2^{m-6}-i \mid 2^{i-5}\binom{2^{m-6}-1}{i}$.
\end{lem}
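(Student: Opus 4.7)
\medskip
\noindent\textbf{Proof plan.} Set $N=2^{m-6}$, so that the statement becomes $N-i\mid 2^{i-5}\binom{N-1}{i}$ for $5\le i\le N$. The case $i=N$ is trivial (both sides vanish), so I would assume $5\le i\le N-1$. The strategy is to translate the divisibility into a $2$-adic valuation inequality, exploiting the fact that $N$ is a power of $2$.

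The first step is the elementary identity
\[
\binom{N-1}{i}=\frac{N-i}{N}\binom{N}{i},
\]
which allows the factor $N-i$ to be cancelled and reduces the required divisibility to $N\mid 2^{i-5}\binom{N}{i}$. Since $N=2^{m-6}$, this is equivalent to
\[
v_{2}\!\Bigl(\binom{N}{i}\Bigr)\ \ge\ (m-6)-(i-5)\ =\ m-1-i.
\]

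The second step invokes the well-known evaluation $v_{2}\!\bigl(\binom{2^{k}}{i}\bigr)=k-v_{2}(i)$ valid for $1\le i\le 2^{k}-1$; this follows at once from $\binom{2^{k}}{i}=\tfrac{2^{k}}{i}\binom{2^{k}-1}{i-1}$ combined with the fact that $\binom{2^{k}-1}{j}$ is odd for every $j$, a direct consequence of Lucas' theorem applied to the all-ones binary representation of $2^{k}-1$. Substituting $k=m-6$ collapses the required bound to the purely elementary condition
\[
v_{2}(i)\le i-5 \qquad \text{for every integer } i\ge 5.
\]

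The final step is to verify this inequality. For $i\in\{5,6,7\}$ one checks it directly, comparing $v_{2}(i)\in\{0,1,0\}$ against $i-5\in\{0,1,2\}$. For $i\ge 8$ one bounds $v_{2}(i)\le\log_{2} i$ and notes that the function $x-5-\log_{2}x$ equals $0$ at $x=8$ and has derivative $1-1/(x\ln 2)>0$ for $x\ge 2$, so it is non-negative throughout $[8,\infty)$. I expect no serious obstruction: once the identity $\binom{N-1}{i}=\tfrac{N-i}{N}\binom{N}{i}$ is in place, the remainder is routine $2$-adic bookkeeping, and the only point meriting any care is the small-$i$ portion of the final elementary inequality.
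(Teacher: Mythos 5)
Your proof is correct, and it reaches the goal by a cleaner route than the paper's. You factor out the troublesome divisor via the identity $\binom{N-1}{i}=\frac{N-i}{N}\binom{N}{i}$ with $N=2^{m-6}$, reducing everything to $N\mid 2^{i-5}\binom{N}{i}$, and then invoke the closed form $v_2\bigl(\binom{2^k}{i}\bigr)=k-v_2(i)$ (via $i\binom{2^k}{i}=2^k\binom{2^k-1}{i-1}$ and Lucas), which collapses the lemma to the elementary inequality $v_2(i)\le i-5$ for $i\ge 5$. The paper instead cancels the factor $2^{m-6}-i$ directly against the falling factorial, writes the quotient as $2^{i-5}(2^{m-6}-1)\cdots(2^{m-6}-i+1)/i!$, disposes of the odd part of $i!$ using integrality of $\binom{2^{m-6}}{i}$, and then compares $2$-adic valuations through Legendre-type floor sums $A=\sum_k\lfloor (i-1)/2^k\rfloor$ and $B=\sum_k\lfloor i/2^k\rfloor$, proving $i+A\ge B+5$ by an induction on $e=\lfloor\log_2 i\rfloor$. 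The two arguments are ultimately the same computation in disguise — since $B-A=v_2(i)$, the paper's target inequality $i+A\ge B+5$ is literally your $v_2(i)\le i-5$ — but your packaging avoids the floor-sum bookkeeping and the separate treatment of the odd part, at the cost of citing Lucas' theorem (or the standard valuation of $\binom{2^k}{i}$), which the paper keeps self-contained. Your handling of the boundary case $i=2^{m-6}$ (both sides zero) is also a point the paper glosses over.
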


\begin{thm} \label{thm2fubini128}
If $m \geq 7$, after the $(m-1)$th term, the sequence $A_{2,2^m}$ has a period with length $\omega(A_{2,2^m})=2^{m-6}$.
\end{thm}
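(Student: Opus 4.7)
The plan is to follow the template of the odd-prime-power proof from Theorem 3.2, but with the 2-adic bookkeeping recalibrated to account for the discrepancy between $\varphi(2^m)=2^{m-1}$ and the actual target period $2^{m-6}$. With $q=2^m$, the goal is
\[
F_{n+2^{m-6},2}\equiv F_{n,2}\pmod{2^m}\qquad (n\ge m-1),
\]
split into the two regimes $n\ge q-3$ and $m-1\le n<q-3$.

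First I would expand $F_{n,2}=\sum_{k}(k+2)!\rs{n+2}{k+2}_{2}$ and apply the explicit formula of Theorem 2.1 (with $r=2$, so that $(j-1)!/(j-2)!=j-1$) to rewrite the difference as
\[
F_{n+2^{m-6},2}-F_{n,2}\;\equiv\; \sum_{k}\sum_{j=2}^{k+2}(-1)^{k-j}\binom{k+2}{j}\,j^{n+1}(j-1)\bigl(j^{2^{m-6}}-1\bigr)\pmod{2^m}.
\]
As in the odd-prime case, truncation of the outer sum at $k=q-3$ is justified because $(k+2)!$ carries enough factors of $2$ for larger $k$. In the second regime the extra ``tail'' sums $\sum_{k=n+1}^{q-3}(k+2)!\rs{n+2}{k+2}_{2}$ drop out automatically because the lower parameter of the $r$-Stirling number exceeds the upper one, exactly as in Theorem 3.2.

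Next I split the inner sum on the parity of $j$. For even $j$, Lemma \ref{secondlem} combined with $n+1\ge m$ gives $2^m\mid j^{n+1}$, so those summands vanish modulo $2^m$. For odd $j$, I would combine three sources of 2-adic valuation: the factor $(j-1)$ contributes at least $1$; the lifting-the-exponent estimate at $p=2$ applied to $j^{2^{m-6}}-1$ contributes at least $v_2(j-1)+v_2(j+1)+(m-6)-1\ge m-4$; and Lemma \ref{appendix} is what supplies the remaining deficit from the binomial coefficient $\binom{k+2}{j}$, after re-indexing so that the hypothesis $5\le i\le 2^{m-6}$ is met. The arithmetic target is $v_2\ge m$ per surviving summand.

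To see that $2^{m-6}$ is the \emph{minimal} period and not merely a period, I would argue by induction on $m$: reducing modulo $2^{m-1}$ forces the minimal period at level $m$ to be either equal to, or exactly twice, the minimal period at level $m-1$, and a direct computation of $F_{n,2}\pmod{128}$ anchors the base case $m=7$. The main obstacle is the odd-$j$ estimate: the naive combination of $(j-1)$ and LTE only yields $v_2\ge m-3$, so the proof hinges on Lemma \ref{appendix} to extract the missing factors of $2$ from the binomial structure, and matching the indexing between the $r$-Stirling expansion and that lemma's hypothesis is the most delicate step.
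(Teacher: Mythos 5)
There is a genuine gap, and it sits exactly where you flagged the ``most delicate step.'' Your plan is to close the 2-adic deficit \emph{termwise}: each surviving odd-$j$ summand is to have $v_2\ge m$, with Lemma \ref{appendix} supplying the missing factors of $2$ out of $\binom{k+2}{j}$. This cannot work, for two reasons. First, the deficit is real at the level of individual terms: for $j=3$ one has $v_2(j-1)=1$ and $v_2\bigl(3^{2^{m-6}}-1\bigr)=m-4$, while $\binom{k+2}{3}$ is odd for infinitely many $k\le 2^m-3$ (e.g.\ $k+2=7$), so such a summand has $v_2$ exactly $m-3$, three factors of $2$ short. Second, Lemma \ref{appendix} is not about $\binom{k+2}{j}$ at all: it concerns $\binom{2^{m-6}-1}{i}$, the coefficients that appear when one expands $(2l+1)^{2^{m-6}}-1=\sum_{i\ge 1}\binom{2^{m-6}}{i}(2l)^i$ and factors out $2^{m-5}$; its only role in the paper is to show that the terms with $i\ge 5$ of that \emph{inner} expansion die modulo $16$, so the inner sum can be truncated at $i=4$. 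It extracts nothing from the outer binomial coefficient, and there is no re-indexing that makes it do so.

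What actually closes the gap in the paper is a cancellation argument across the whole double sum, not a valuation bound per term. After pulling out the explicit factor $2^{m-4}$ and one more factor of $2$ from $l^2(l+1)$, one must show the residual sum
$\sum_{k}(-1)^{k+1}\sum_{l}\binom{k+2}{2l+1}(2l+1)^{n+1}\bigl(\tfrac{l(l+1)}{2}\bigr)(-2l^2-2l+1)\,l$
vanishes modulo $8$. The paper does this by introducing $A(k,r,n)=\sum_{l}\binom{k+2}{2l+r}P(l)$, deriving the Pascal recurrence $A(k,r,n)=A(k-1,r,n)+A(k-1,r-1,n)$, exploiting the periodicity of $P$ in $l$ (period $16$, hence period $32$ in $r$) and of $(2l+1)^{n+1}$ modulo $16$ (period $8$ in $n$), and then verifying a finite set of base cases by enumeration; the small cases $7\le m\le 9$ are also checked by direct computation. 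None of this machinery, nor any substitute for it, appears in your proposal, so the central step of the proof is missing. (Your closing remark about establishing \emph{minimality} of the period is a separate issue the paper itself does not address, but the primary defect is the unjustified odd-$j$ estimate.)
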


\begin{proof}
In the case of $n \geq 2^m-3$, from Lemma \ref{secondlem} we can deduce that $n \geq 2^m-3 \geq m-1
$. So we have
\begin{align*}
F_{n+2^{m-6},2}- & F_{n,2} \equiv \sum_{k=0}^{n+2^{m-6}}(k+2)!\rs{n+2^{m-6}+2}{k+2}_{2}-\sum_{k=0}^{n}(k+2)!\rs{n+2}{k+2}_{2} \\
\equiv & \sum_{k=0}^{2^m-3}(k+2)!\left(\rs{n+2^{m-6}+2}{k+2}_{2}-\rs{n+2}{k+2}_{2}\right) \hbox{(mod $2^m$)}\\
\equiv & \sum_{k=0}^{2^m-3}\sum_{j=2}^{k+2}(-1)^{k+2-j}\binom{k+2}{j}j^{n+1} (j^{2^{m-6}}-1) (j-1) \hbox{ (mod $2^m$)}.
\end{align*}
When $j$ is even, then $j^{n+1}=(2c)^{2^m-2+h}$, for some $h \geq 0$. So by Lemma \ref{secondlem}, $2^m \mid j^{n+1}$. For odd $j$'s we have

\begin{align*}
F_{n+2^{m-6},2}- & F_{n,2} \equiv \sum_{k=0}^{2^m-3}\sum_{l=1}^{\lfloor (k+1)/2 \rfloor}(-1)^{k+2-(2l+1)}\binom{k+2}{2l+1}(2l+1)^{n+1}\\
\times & ((2l+1)^{2^{m-6}}-1) \times 2l \hbox{ (mod $2^m$)}\\
\equiv & 2^{m-4}\sum_{k=0}^{2^m-3}(-1)^{k+1}  \sum_{l=1}^{\lfloor (k+1)/2 \rfloor} \binom{k+2}{2l+1}(2l+1)^{n+1}\\
\times & \left(\frac{(2l+1)^{2^{m-6}}-1}{2^{m-5}}\right) l \hbox{ (mod $2^m$)}\\
\equiv & 2^{m-4}\sum_{k=0}^{2^m-3}(-1)^{k+1} \sum_{l=1}^{\lfloor (k+1)/2 \rfloor} \binom{k+2}{2l+1}(2l+1)^{n+1}\\
\times & \sum_{i=1}^{2^{m-6}}l^i2^{i-1} \left(\frac{(2^{m-6}-1)!}{i!(2^{m-6}-i)!}\right) l \hbox{ (mod  $2^m$)}.
\end{align*}
Last expression contains $m-4$ factors of 2, so it is sufficient to prove the last summation is divisible by 16. We denote this summation by $\mathcal{S}$. Simplify the summation $\sum_{i=1}^{2^{m-6}}l^i2^{i-1}\frac{(2^{m-6}-1)!}{i!(2^{m-6}-i)!}$ and use the Lemma \ref{appendix} gives
\begin{align*}
\sum_{i=1}^{2^{m-6}} l^i 2^{i-1} & \left(\frac{(2^{m-6}-1)!}{i!(2^{m-6}-i)!}\right) \equiv \sum_{i=1}^{4}l^i2^{i-1} \left(\frac{(2^{m-6}-1)!}{i!(2^{m-6}-i)!}\right) \hbox{(mod 16)}\\
\equiv  l+ & l^2(2^{m-6}-1)+\frac{l^3\times 2(2^{m-6}-1)(2^{m-6}-2)}{3}\\
+ & \frac{l^4(2^{m-6}-1)(2^{m-6}-2)(2^{m-6}-3)}{3} \hbox{(mod  16)}.
\end{align*}
Assume $m \geq 10$ (the case $7 \leq m \leq 9$ is proceeded at last). So $16 \mid 2^{m-6}$. Let $3a= 2(2^{m-6}-1)(2^{m-6}-2)$ and $3b= (2^{m-6}-1)(2^{m-6}-2)(2^{m-6}-3)$. Then $3a \equiv 4$(mod 16) and $3b \equiv -6$(mod 16). Therefore $a \equiv -4$(mod 16) and $b \equiv -2$(mod 16). So the proof continues as follows
\begin{align*}
\mathcal S & \overset{16}{\equiv} \sum_{k=0}^{2^m-3}(-1)^{k+1}\left(\sum_{l=1}^{\lfloor (k+1)/2 \rfloor} \binom{k+2}{2l+1}(2l+1)^{n+1} (l-l^2-4l^3-2l^4) l \right) \\
\mathcal S & \overset{8}{\equiv} \sum_{k=0}^{2^m-3}(-1)^{k+1} \sum_{l=1}^{\lfloor (k+1)/2 \rfloor} \binom{k+2}{2l+1}(2l+1)^{n+1} \left( \frac{l(l+1)}{2} \right) (-2l^2-2l+1) l.
\end{align*}
Let $P(l)$ and $A(k,r,n)$ be the remainder of $\frac{1}{2}(2l+1)^{n+1} (l(l+1)) (-2l^2-2l+1) l$ and $\sum_{l=-\infty}^{\infty}\binom{k+2}{2l+r}P(l)$ divided by 8, respectively. By Pascal identity, we have $\binom{k+2}{2l+r}=\binom{k+1}{2l+r}+\binom{k+1}{2l+r-1}$ and therefore
\begin{align*}
\sum_{l=-\infty}^{\infty}\binom{k+2}{2l+r}P(l)=\sum_{l=-\infty}^{\infty}\binom{k+1}{2l+r}P(l)+ \sum_{l=-\infty}^{\infty}\binom{k+1}{2l+r-1}P(l),
\end{align*}
so
\begin{eqnarray}\label{11}
A(k,r,n)=A(k-1,r,n)+A(k-1,r-1,n).
\end{eqnarray}
We can write
\begin{align*}
A(k,r+32,n)\equiv \sum_{l=-\infty}^{\infty}\binom{k+2}{2l+r+32}P(l). \hbox{ (mod 8)}
\end{align*}
The sequence $\left(P(l)\right)_{l=-\infty}^{\infty}$ has period 16, so $P(l+16)=P(l)$. Set $l'=l+16$, then
\begin{eqnarray}\label{12}
A(k,r+32,n)\equiv\sum_{l'=-\infty}^{\infty}\binom{k+2}{2l'+r}P(l') \equiv A(k,r,n) \hbox{ (mod 8)}.
\end{eqnarray}
Since $(2l+1,16)=1$, the Euler's Theorem implies $(2l+1)^{8} \equiv 1 $ (mod 16) and therefore $(2l+1)^{n+1+8} \equiv (2l+1)^{n+1} $ (mod 16). $A(6,r,n)$ vanishes for $1\leq r \leq 32$ and $9 \leq n \leq 24$, by enumeration, then by (\ref{11}) and (\ref{12}), we deduce that
\begin{eqnarray} \label{13}
A(k,r,n)=0, \hbox {  for $k\geq 6$. }
\end{eqnarray}
Therefore
\begin{align*}
A(k,1,n) & \equiv \sum_{l=-\infty}^{\infty}\binom{k+2}{2l+1}(2l+1)^{n+1} \left(\frac{l(l+1)}{2}\right)  (-2l^2-2l+1) l \hbox{ (mod 8)}\\
& \equiv \sum_{l=1}^{\lfloor (k+1)/2 \rfloor}\binom{k+2}{2l+1}(2l+1)^{n+1} \left(\frac{l(l+1)}{2}\right) (-2l^2-2l+1) l \\
& \equiv  0 \hbox{ (mod 8)},
\end{align*}
for $k\geq 6$. If $1 \leq k \leq 5$, $9 \leq n \leq 24$ and $1 \leq r \leq 32$ we have $\sum_{k=1}^{5}(-1)^{k+1}A(k,r,n) \equiv 0$ (mod  8). The period length of $A(k,r,n)$ with respect to $r$ and $n$ implies that \\ $\sum_{k=1}^{5}(-1)^{k+1}A(k,1,n) \equiv 0$ (mod 8), for $n \geq 9$. Combining this with (\ref{13}) we have
\begin{align*}
\mathcal S\equiv \sum_{k=1}^{2^m-3}(-1)^{k+1}A(k,1,n)\equiv 0 \hbox{ (mod 8), for $n \geq 0$.}
\end{align*}
So the result follows in the case of $n\geq 2^m-3$. If $m-1 \leq n <2^m-3$ we can write
\begin{align*}
F_{n+2^{m-6},2} & - F_{n,2} = \sum_{k=0}^{n+2^{m-6}}(k+2)!\rs{n+2^{m-6}+2}{k+2}_{2}-\sum_{k=0}^{n}(k+2)!\rs{n+2}{k+2}_{2}\\
= & \sum_{k=0}^{2^m-3}(k+2)!\left(\rs{n+2^{m-6}+2}{k+2}_{2}-\rs{n+2}{k+2}_{2}\right)\\
- & \sum_{k=n+2^{m-6}+1}^{2^m-3}(k+2)!\rs{n+2^{m-6}+2}{k+2}_{2}+\sum_{k=n+1}^{2^m-3}(k+2)!\rs{n+2}{k+2}_{2}\\
\equiv & \sum_{k=0}^{2^m-3}\sum_{j=1}^{k+2}(-1)^{k+2-j} \binom{k+2}{j} j^{n+1} (j^{2^{m-6}}-1) (j-1) \hbox{ (mod $2^m$)}
\end{align*}
When $j$ is even, then $j^{n+1}=(2c)^{m+h}$, for some $h \geq 0$, so $2^m \mid j^{n+1}$. Since $m\geq 10$, for odd $j$'s we have
\begin{align*}
\sum_{k=0}^{2^m-3}\sum_{j=1}^{k+2} (-1)^{k+2-j}\binom{k+2}{j}j^{n+1} & (j^{2^{m-6}}-1)(j-1)\\
\equiv 2^{m-4}\sum_{k=0}^{2^m-3}(-1)^{k+1} & \sum_{l=1}^{\lfloor (k+1)/2 \rfloor} \binom{k+2}{2l+1}(2l+1)^{n+1}\\
& \times (l -l^2-4l^3-2l^4) l \hbox{ (mod $2^m$)}.
\end{align*}
The last summation is exactly the $\mathcal S$ and the proof will be similar as above. Combine with the previous case we have the following congruence relation
\begin{eqnarray} \label{14}
F_{n+2^{m-6},2} \equiv F_{n,2} \hbox{ (mod $2^m$), for $m \geq 10$}.
\end{eqnarray}

In the case where $7 \leq m \leq 9$, the remainder of the sum $\sum_{k=0}^{2^m-3}(-1)^{k+1}$\\
$\times \sum_{l=1}^{\lfloor (k+1)/2 \rfloor} \binom{k+2}{2l+1} (2l+1)^{n+1} \left(\sum_{i=1}^{4}l^i2^{i-1}\frac{(2^{m-6}-1)!}{i!(2^{m-6}-i)!}\right) l$ modulo $16$ is computed for $m-1 \leq n \leq m+14$. Divisibility of all these values by $16$ implies that the recent sum is divisible by $16$ therefore
\begin{eqnarray} \label{15}
F_{n+2^{m-6},2} \equiv F_{n,2} \hbox{ (mod $2^m$), for $7 \leq m \leq 9$}.
\end{eqnarray}
Summing up the congruence relations (\ref{14}) and (\ref{15}) gives
\begin{align*}
\omega(A_{2,2^m})=2^{m-6}, \hbox{ for $m \geq 7$}.
\end{align*}
\end{proof}

\bth \label{thmrfubini4,8}
For $m=1$ and $m=2$, the sequence $A_{r,2^m}$ is periodic from the first term and the period length is $\omega({A_{r,2^m}})=1$.
\end{thm}

\begin{proof}
The proof of this theorem is divided into three cases. For  $r=2$ we have
\begin{align*}
F_{n+1,2} & - F_{n,2}= \sum_{k=0}^{n+1}(k+2)!\rs{n+3}{k+2}_{2}-\sum_{k=0}^{n}(k+2)!\rs{n+2}{k+2}_{2}\\
& \equiv  2\left(\rs{n+3}{2}_{2}-\rs{n+2}{2}_{2}\right)+6\left(\rs{n+3}{3}_{2}-\rs{n+2}{3}_{2}\right) \hbox{(mod 4)}\\
&= 2\left(2^{n+1}-2^n\right)+6\left(3^{n+1}-2^{n+1}-(3^n-2^n)\right)\\
&= 2^{n+1}+6(2 \times 3^n-2^n)=4(2^{n-1}+3^{n+1}-3\times 2^{n-1})\\
&= 4(3^{n+1}-2^n) \equiv 0 \hbox{(mod 4)}.
\end{align*}
So we can deduce that $\omega(A_{2,4})=1$ and obviously $\omega(A_{2,2})=1$.

For $r=3$ we can write
\begin{align*}
F_{n+1,3}-F_{n,3}= & \sum_{k=0}^{n+1}(k+3)!\rs{n+4}{k+3}_{3}-\sum_{k=0}^{n}(k+3)!\rs{n+3}{k+3}_{3}\\
\equiv & 6\left(\rs{n+4}{3}_{3}-\rs{n+3}{3}_{3}\right) \hbox{(mod 4)}\\
= & 6\left(3^{n+1}-3^n\right)=6 \times 2 \times 3^n=4 \times 3^{n+1} \equiv 0 \hbox{(mod 4)}.
\end{align*}
Therefore we have $\omega(A_{3,4})=1$ and $\omega(A_{3,2})=1$.

Finally if $r \geq 4$, let $r=4+h$, for some $h \geq 0$, then
\begin{align*}
F_{n+1,r}-F_{n,r}= & \sum_{k=0}^{n+1}(k+r)!\rs{n+1+r}{k+r}_{r}-\sum_{k=0}^{n}(k+r)!\rs{n+r}{k+r}_{r}.
\end{align*}
Since $4 \mid (k+r)!$, for all $k \geq 0$, we can write $F_{n+1,r}-F_{n,r} \equiv 0$ (mod 4). Therefore $\omega(A_{r,4})=1$ and $\omega(A_{r,2})=1$.
\end{proof}

\bth \label{thmrfubini8to64}
If $3 \leq m \leq 6$, after the $(m-1)$th term, the sequence $A_{r,2^m}$ has a period with length $\omega(A_{r,2^m})=2$.
\end{thm}

\begin{proof}
The proof of this theorem is similar to the proof of Theorem \ref{thm2fubini8to64}. Proving for $m=6$ deduces the result for $m=3,4,5$. Let $m=6$, so $n \geq 5$; because $n\geq m-1$. For $3 \leq r \leq 7$ we have
\begin{align*}
F_{n+2,r} - & F_{n,r}= \sum_{k=0}^{n+2}(k+r)!\rs{n+2+r}{k+r}_{r}-\sum_{k=0}^{n}(k+r)!\rs{n+r}{k+r}_{r}\\
\equiv & \sum_{k=0}^{7-r}\sum_{j=r}^{k+r}(-1)^{k+r-j}\binom{k+r}{j} j^{n+1} (j^2-1) \left(\frac{(j-1)!}{(j-r)!}\right) \hbox{ (mod 64)}\\
\equiv & \sum_{k=0}^{7-r}\sum_{j=r}^{k+r}(-1)^{k+r-j}\binom{k+r}{j}j^{n+1} (j^2-1) (j-1) \left(\frac{(j-2)!}{(j-r)!}\right) \hbox{ (mod 64)}
\end{align*}
When $j$ is even, then $j^{n+1}=(2c)^{6+h}$, for some $h \geq 0$, and so $64 \mid j^{n+1}$. For odd $j$'s we have $(j,64)=1$ and the Euler's Theorem gives $j^{32} \equiv 1$ (mod 64). Therefore $j^{n+1+32} \equiv j^{n+1}$ (mod 64) and we can write
\begin{align*}
\sum_{k=0}^{7-r}\sum_{j=r}^{k+r} & (-1)^{k+r-j} \binom{k+r}{j}j^{n+1} (j^2-1) (j-1) \frac{(j-2)!}{(j-r)!} \\
\equiv & \sum_{k=0}^{7-r}\sum_{l=\lfloor r/2 \rfloor}^{\lfloor (k+r-1)/2 \rfloor}(-1)^{k+r-(2l+1)} \binom{k+r}{2l+1} (2l+1)^{n+1} ((2l+1)^2-1)\\
\times & 2l \left(\frac{(2l-1)!}{(2l+1-r)!}\right) \hbox{(mod 64)}\\
\equiv & 16\sum_{k=0}^{7-r}(-1)^{k+r-1}\sum_{l=\lfloor r/2 \rfloor}^{\lfloor (k+r-1)/2 \rfloor} \binom{k+r}{2l+1} (2l+1)^{n+1} \left(\frac{l(l+1)}{2}\right)\\
\times & l \left(\frac{(2l-1)!}{(2l+1-r)!}\right) \hbox{(mod 64)}
\end{align*}
By computation we see that the recent summation is divisible by 4, for $2\leq n \leq 33$. So the proof for $3 \leq r \leq 7$ is completed.

If $r \geq 8$, since $64 \mid 8!$, then $64 \mid (k+r)!$, and
\begin{align*}
F_{n+2,r}-F_{n,r} & =\sum_{k=0}^{n+2}(k+r)!\rs{n+2+r}{k+r}_{r}-\sum_{k=0}^{n}(k+r)!\rs{n+r}{k+r}_{r} \\
& \equiv 0 \hbox{ (mod 64)},
\end{align*}
so $\omega(A_{r,2^6})=2$, for $r \geq 8$, and the proof is completed.
\end{proof}

\bth \label{thmrfubini128}
If $m \geq 7$, after the $(m-1)$th term, the sequence $A_{r,2^m}$ has a period with length $\omega(A_{r,2^m})=2^{m-6}$.
\end{thm}

\begin{proof}
The proof of this theorem is similar to the proof of Theorem \ref{thm2fubini128}.
In the case of $n \geq 2^m-r-1$ and $r \geq 8$ we have
\begin{align*}
F_{n+2^{m-6},r}- F_{n,r}=& \sum_{k=0}^{n+2^{m-6}}(k+r)!\rs{n+2^{m-6}+r}{k+r}_{r}-\sum_{k=0}^{n}(k+r)!\rs{n+r}{k+r}_{r}\\
\equiv  \sum_{k=0}^{2^m-r-1} (k+&r)!\left(\rs{n+2^{m-6}+r}{k+r}_{r}-\rs{n+r}{k+r}_{r}\right) \hbox{ (mod $2^m$)}\\
\equiv \sum_{k=0}^{2^m-r-1} \sum_{j=r}^{k+r}&(-1)^{k+r-j}\binom{k+r}{j} j^{n+1} (j^{2^{m-6}}-1) \left(\frac{(j-1)!}{(j-r)!}\right) \hbox{(mod $2^m$)}
\end{align*}
In the case of $2^m > r > 2^m-m$, since $m\geq 7$ this implies that $r>2^m-m\geq 2^{m-1}$ and so
\beq
2^m \mid (2^{m-1})! \mid (k+r)! \hbox{, for each $k\geq 0$.}
\eeq
Therefore both summations in the above first equation are zero modulo $2^m$ and in this case $\omega(A_{r,2^m})=2^{m-6}$. When $r \leq 2^m -m$, if $j$ is even then $j^{n+1}=(2c)^{2^m-r+h}$, for some $h \geq 0$. So $2^m \mid j^{n+1}$. For odd $j$'s, $(j,2^{m-5})=1$ and by Euler's Theorem $2^{m-5} \mid j^{2^{m-6}}-1$. Since $r \geq 8$ we can write $\frac{(j-1)!}{(j-r)!}= \left(\frac{(j-8)!}{(j-r)!} \right) \prod_{i=1}^{7}  (j-i)$. Therefore $32 \mid \frac{(j-1)!}{(j-r)!}$ and $2^m \mid (j^{2^{m-6}}-1)\left(\frac{(j-1)!}{(j-r)!} \right)$.

In the case of $m-1 \leq n < 2^m-r-1$ and $r \geq 8$ we have
\begin{align*}
F_{n+2^{m-6},r}-F_{n,r} =& \sum_{k=0}^{n+2^{m-6}}(k+r)!\rs{n+2^{m-6}+r}{k+r}_{r}-\sum_{k=0}^{n}(k+r)!\rs{n+r}{k+r}_{r}\\
\equiv & \sum_{k=0}^{2^m-r-1}(k+r)!\left(\rs{n+2^{m-6}+r}{k+r}_{r}-\rs{n+r}{k+r}_{r}\right)\\
& - \sum_{k=n+2^{m-6}+1}^{2^m-r-1}(k+r)!\rs{n+2^{m-6}+r}{k+r} \\
& + \sum_{k=n+1}^{2^m-r-1}(k+r)!\rs{n+r}{k+r} \hbox{ (mod $2^m$)}\\
= & \sum_{k=0}^{2^m-r-1}(k+r)!\left(\rs{n+2^{m-6}+r}{k+r}_{r}-\rs{n+r}{k+r}_{r}\right) + 0,
\end{align*}
and the proof proceeds as the previous case. In the case of $3 \leq r \leq 7$ one can deduce similar to the proof of Theorem \ref{thm2fubini128} that
\begin{align*}
F_{n+2^{m-6},r}-F_{n,r}
\equiv & \sum_{k=0}^{2^m-r-1}\sum_{j=r}^{k+r}(-1)^{k+r-j}\binom{k+r}{j} j^{n+1} (j^{2^{m-6}}-1)\\
\times & \left(\frac{(j-1)!}{(j-r)!}\right) \hbox{ (mod $2^m$)}.
\end{align*}
Exactly the same as Theorem \ref{thm2fubini128}, the even $j$'s run out. Therefore only the terms with odd $j$ remain. So we have
\begin{align*}
 F_{n+2^{m-6},r}-&F_{n,r}\\
\equiv  \sum_{k=0}^{2^m-r-1} & \sum_{l=\lfloor r/2 \rfloor}^{\lfloor (k+r-1)/2 \rfloor}(-1)^{k+r-(2l+1)} \binom{k+r}{2l+1} (2l+1)^{n+1} ((2l+1)^{2^{m-6}}-1)\\
\times & \left(\frac{((2l+1)-1)!}{((2l+1)-r)!}\right) \hbox{ (mod $2^m$)}\\
\equiv 2^{m-5} & \sum_{k=0}^{2^m-r-1}(-1)^{k+r-1} \sum_{l=\lfloor r/2 \rfloor}^{\lfloor (k+r-1)/2 \rfloor} \binom{k+r}{2l+1} (2l+1)^{n+1} \\
\times & \left(\sum_{i=1}^{2^{m-6}}l^i2^{i-1} \left(\frac{(2^{m-6}-1)!}{i!(2^{m-6}-i)!}\right)\right) \left(\frac{(2l)!}{(2l-r+1)!}\right) \hbox{ (mod  $2^m$)}.
\end{align*}
Since $(2l+1,16)=1$, Euler's Theorem shows that $(2l+1)^{n+1+8} \equiv (2l+1)^{n+1}$ (mod 16). If $m \geq 10$, we have
\begin{align*}
F_{n+2^{m-6},r} - & F_{n,r} \equiv 2^{m-4}\sum_{k=0}^{2^m-r-1}(-1)^{k+r-1} \sum_{l=\lfloor r/2 \rfloor}^{\lfloor (k+r-1)/2 \rfloor} \binom{k+r}{2l+1} (2l+1)^{n+1}\\
\times & \left(\frac{l(l+1)}{2}\right) (-2l^2-2l+1) \left(\frac{(2l)!}{(2l-r+1)!}\right) \hbox{ (mod  $2^m$)}.
\end{align*}
Therefore it is sufficient to compute the above summation (without factor $2^{m-4}$) for $3 \leq r \leq 7$ and $9 \leq n \leq 16$ to show that it is divisible by 16.

For $7 \leq m \leq 9$ we evaluate the summation
\begin{align*}
\sum_{k=0}^{2^m-r-1}(-1)^{k+r-1} \sum_{l=\lfloor r/2 \rfloor}^{\lfloor (k+r-1)/2 \rfloor} & \binom{k+r}{2l+1} (2l+1)^{n+1} \sum_{i=1}^{2^{m-6}}l^i2^{i-1}\frac{(2^{m-6}-1)!}{i!(2^{m-6}-i)!}\\
\times & \left(\frac{(2l)!}{(2l-r+1)!}\right)
\end{align*}
for $m-1 \leq n \leq m+6$ to show that it is divisible by 32. Then it follows that $\omega(A_{r,2^m})=2^{m-6}$, for all $m \geq 7$.
\end{proof}

\section{The Conclusion}
We have the final theorem which shows how to compute $\omega(A_{r,s})$ for any $s \in \mathbb{N}$.
\bth
Let $s \in \mathbb{N}$ and $s>1$ with the prime factorization $s=2^{m}p_1^{{m}_1}p_2^{{m}_2} \ldots p_k^{{m}_k}$ and let $D=\{ p_{i}^{m_{i}} \mid p_{i}^{m_{i}}>r, 1\leq i \leq k \}$. Define $E=\{ m_{i}-1 \mid p_{i}^{m_{i}} \in D \}$, $F=\{ \varphi(p_{i}^{m_{i}}) \mid p_{i}^{m_{i}} \in D \}$ and $a=\hbox{max}(E \cup \{m-1\})$ and let $b$ be the Least Common Multiple (LCM) of the elements of $F$. Then
\begin{align}\label{final}
  \omega(A_{r,s}) = \left\{
               \begin{array}{ll}
                 b, & \hbox{ if $0 \leq m \leq 2$ or $2^m \leq r$;}\\
                 LCM(2,b), & \hbox{ if $3 \leq m \leq 6$ and $2^m > r$ ;}\\
                 LCM(2^{m-6},b), & \hbox{ if $m \geq 7$ and $2^m > r$;}
               \end{array}
             \right.
\end{align}
and periodicity of the sequence $A_{r,s}$ is seen after the $a$th term.
\end{thm}

\begin{proof}
Let $l$ be the right hand side of (\ref{final}). For each $d \in D \cup \{2^m\}$, $\omega(A_{r,d}) \mid l$ and for each $p_{j}^{m_{j}} \not\in D$ that $1\leq j \leq k$, $1=\omega(A_{r,p_j^{m_j}}) \mid l$, so
\begin{align*}
F_{n+l,r} &\equiv F_{n,r} \hbox{ (mod $2^m$),}\\
F_{n+l,r} &\equiv F_{n,r} \hbox{ (mod $p_{i}^{m_i}$), for $i=1,2,\ldots,k$.}
\end{align*}
Since $(2^m,p_1^{m_1},p_2^{m_2},\ldots,p_k^{m_k})=1$, the multiplication of all above congruence relations gives the required result.
\end{proof}

\bibliographystyle{amsplain}

\begin{thebibliography}{10}

\vskip .5 cm

\setlinespacing{1.3}

\bibitem{B} A. Z. Broder, \textit{The r-Stirling numbers}, Discrete Mathematics, vol. 49, no. 3, 1984, pp. 241--259.

\bibitem{C} A. Cayley, \textit{The Collected Mathematical Papers.} Cambridge University Press, vol. 1, 1889.

\bibitem{CC} C. Chuan-Chong, K. Khee-Meng \textit{Principles and techniques in combinatorics.} Singapore: World Scientific, vol. 2, 1992.

\bibitem{M} I. Mezo, \textit{Periodicity of the last digits of some combinatorial sequences.} Journal of Integer Sequences, vol. 17, no. 2, 3, 2014.

\bibitem{P} B. Poonen, \textit{Periodicity of a combinatorial sequence}, Fibonacci Quarterly, vol. 26 (1), 1988, pp. 70--76.

\end{thebibliography}

\begin{appendix}
\section{Proof of Lemma \ref{appendix}}
Simplify the lemma's relation we have
\begin{eqnarray}\label{factorial}
\frac{2^{i-5}\binom{2^{m-6}-1}{i}}{2^{m-6}-i}=\frac{2^{i-5}(2^{m-6}-1)(2^{m-6}-2)\cdots(2^{m-6}-i+1)}{i!}.
\end{eqnarray}
It is sufficient to show that the right hand side of (\ref{factorial}) is integer. We know that $\binom{2^{m-6}}{i} \in \N$, i.e.,
\begin{eqnarray*}
i! \mid 2^{m-6}(2^{m-6}-1)\cdots (2^{m-6}-i+1).
\end{eqnarray*}
If $O_i$ denotes the product of the odd factors of $i!$, since $(O_i,2^{m-6})=1$, then $O_i \mid (2^{m-6}-1)\cdots (2^{m-6}-i+1)$. So in (\ref{factorial}) we only want to prove that the powers of 2 in $2^{i-5}(2^{m-6}-1)(2^{m-6}-2)\cdots(2^{m-6}-i+1)$ is greater than or equal to the powers of 2 in $i!$.
Let $A$ and $B$ be the greatest integers such that
\begin{align*}
2^A & \mid (2^{m-6}-1)(2^{m-6}-2)\cdots(2^{m-6}-i+1), \hbox{ and }\\
2^B & \mid i!.
\end{align*}
Let $e$ be the unique integer such that $2^e\leq i < 2^{e+1}$. So
\begin{align}\label{def}
A=\sum_{k=1}^{e} \lfloor \frac{i-1}{2^k} \rfloor,  B=\sum_{k=1}^{e} \lfloor \frac{i}{2^k} \rfloor.
\end{align}
If we show that
\begin{eqnarray}\label{claim}
B-A\leq e
\end{eqnarray}
then the lemma is concluded if it is proved that
\begin{eqnarray}\label{conclusion}
i+A \geq B+5.
\end{eqnarray}
Determine (\ref{conclusion}) by induction. For $e=2$, three cases are exist as shown in the following table.

\makebox[12cm]{
\begin{tabular}[b]{c|c|c}
  $i$ & $A$ & $B$\\
\cline{1-3}
  5 & 3 & 3 \\
\cline{1-3}
  6 & 3 & 4 \\
\cline{1-3}
  7 & 4 & 4 \\
\end{tabular}
}

\noindent For $e \geq 3$ one can deduce by simple induction that
\begin{eqnarray}
2^e\geq e+5,
\end{eqnarray}
so $i\geq 2^e \geq e+5$. Add $B-e$ to these inequalities and use (\ref{claim}) demonstrates (\ref{conclusion}) for $i\geq 8$.

To prove the equality part of (\ref{claim}), consider $i=2^e$, for $e \geq 3$. In the case where $2^e < i < 2^{e+1}$, consider $1 \leq k < e$. By The Division Algorithm, $i=2^{k}h+r$ where $0\leq r<2^k$,  therefore $\lfloor \frac{i}{2^k} \rfloor=\lfloor h+\frac{r}{2^k} \rfloor=h$. Furthermore we have
\begin{align}\label{div_alg}
\lfloor \frac{i-1}{2^k} \rfloor=\lfloor h+\frac{r-1}{2^k} \rfloor=h+\lfloor \frac{r-1}{2^k} \rfloor.
\end{align}
Dividing the inequality $-1 \leq r-1< 2^k-1$ by $2^k$ gives  $-1<\frac{-1}{2^k} \leq \frac{r-1}{2^k}< \frac{2^k-1}{2^k}<1$ and therefore $-1 \leq \lfloor \frac{r-1}{2^k} \rfloor \leq 0$. It means that the right hand side of (\ref{div_alg}) is equal to $h$ or $h-1$. Therefore
\begin{align}\label{difference}
0 \leq \lfloor \frac{i}{2^k} \rfloor-\lfloor \frac{i-1}{2^k} \rfloor \leq 1, \hbox{ for $1 \leq k < e$.}
\end{align}
If $k=e$, we have $i=2^e+r'$, $r'>0$, and $i-1=2^e+r'-1$. So $\lfloor \frac{i}{2^e} \rfloor=\lfloor 1+\frac{r'}{2^e} \rfloor=1$. Since $0 \leq r'-1<2^e$, then $0 \leq \frac{r'-1}{2^e}<1$ and so $\lfloor \frac{r'-1}{2^e} \rfloor=0$. Now we can deduce that $\lfloor \frac{i-1}{2^e} \rfloor=\lfloor 1+\frac{r'-1}{2^e} \rfloor=1$. Using (\ref{difference}) and $\lfloor \frac{i}{2^e} \rfloor-\lfloor \frac{i-1}{2^e} \rfloor=0$ we have
\begin{align}
\sum_{k=1}^{e}\lfloor \frac{i}{2^k} \rfloor-\lfloor \frac{i-1}{2^k} \rfloor \leq e-1 < e.
\end{align}
Substituting these results in the definitions of $A$ and $B$ in (\ref{def}) gives (\ref{claim}).

\end{appendix}
\vskip 1.5cm
\end{document}